\documentclass[hidelinks]{amsart}

\usepackage[defaultlines=2,all]{nowidow} 

\usepackage{ mathtools, amssymb, stmaryrd, mathdots, bbm, mleftright, faktor }

\usepackage{ letltxmacro }    
\usepackage{ xparse }
\usepackage{ stackengine, graphicx, scalerel, etoolbox }
\usepackage{xpatch} 

\usepackage[shortlabels]{ enumitem }

\usepackage{ amsthm, thmtools, url }
\usepackage[linktoc=all,pdftitle={Spin characters of \^{A}n proportional to linear characters}]{ hyperref }
\usepackage[nameinlink, capitalize]{ cleveref }

\crefname{definition}{Def.}{Defs.}
\Crefname{definition}{Definition}{Definitions}

\numberwithin{equation}{section}
\declaretheorem[style=plain,numberlike=equation]{theorem}
\declaretheorem[style=plain,numberlike=theorem]{lemma}

\declaretheorem[style=remark,numberlike=theorem]{remark}

\makeatletter    
\newcommand{\prelistcommand}{\nobreak\leavevmode\@nobreaktrue}
\makeatother

\SetEnumitemKey{beginthm}{before=\prelistcommand}   
\SetEnumitemKey{smallin}{leftmargin=2.7em, labelindent=*}   

\setlist{smallin, topsep=1pt}

\crefformat{section}{\S#2#1#3}
\crefrangeformat{section}{\S#3#1#4--#5#2#6}
\crefmultiformat{section}{\S#2#1#3}{ and~\S#2#1#3}{, \S#2#1#3}{, and~\S#2#1#3}
\crefrangemultiformat{section}{\S#2#1#3}{ and~\S#2#1#3}{, \S#2#1#3}{, and~\S#2#1#3}

\crefformat{equation}{(#2#1#3)}
\crefrangeformat{equation}{(#3#1#4)--(#5#2#6)}
\crefmultiformat{equation}{(#2#1#3)}{ and~(#2#1#3)}{, (#2#1#3)}{, and~(#2#1#3)}
\crefrangemultiformat{equation}{(#2#1#3)}{ and~(#2#1#3)}{, (#2#1#3)}{, and~(#2#1#3)}

\crefname{enumi}{}{}

\makeatletter
\newcommand{\proofsubheading}[1]{%
  \par
  \addvspace{\medskipamount}%
  \noindent\emph{#1}\par\nobreak
  \addvspace{\smallskipamount}%
  \@afterheading
}
\makeatother

\makeatletter
\let\cite\relax
\DeclareRobustCommand{\cite}{%
  \let\new@cite@pre\@gobble
  \@ifnextchar[\new@cite{\@citex[]}}
\def\new@cite[#1]{\@ifnextchar[{\new@citea{#1}}{\@citex[#1]}}
\def\new@citea#1{\def\new@cite@pre{#1}\@citex}
\def\@cite#1#2{[{\new@cite@pre\space#1\if\relax\detokenize{#2}\relax\else, #2\fi}]}
\makeatother

\newcommand{\initialsspace}{0.1em}
 
\makeatletter
\newcommand{\splitlist}[1]{\@splitlist#1\@nil}
\def\@splitlist#1\@nil{%
  \if\relax\detokenize{#1}\relax
    \expandafter\@gobble
  \else
    \expandafter\@firstofone
  \fi
  {\@spl@tlist#1.\@nil}%
}
 
\def\@spl@tlist#1.#2\@nil{%
    \def\tmpA{#1}%
    \def\tmpB{#2}%
    \def\tmpP{.}%
    \ifx\tmpB\tmpP%
        #1.%
    \else{%
        \ifx\tmpA\@empty%
        \else%
                #1.\nobreak\hspace{\initialsspace}%
        \fi%
    }%
    \fi%
  \if\relax\detokenize{#2}\relax
    \expandafter\@firstoftwo
  \else
    \expandafter\@secondoftwo
  \fi
  {\unskip}%
  {\@spl@tlist#2\@nil}%
}
\makeatother

\NewDocumentCommand\set{s m}{%
    \IfBooleanTF#1%
    {\left\{ #2 \right\}}%
    {\{#2\}}%
}
\NewDocumentCommand\setbuild{s m m}{%
    \IfBooleanTF#1%
    {\ensuremath{\left\{\, #2 \, \middle| \, #3 \,\right\}}}%
    {\ensuremath{\{\, #2 \, \mid \, #3 \,\}}}%
}
\NewDocumentCommand\spangle{s m m m}{%
    \IfBooleanTF#1%
    {\ensuremath{\left\langle\, #2 \, \middle| \, #3 \,\right\rangle_{#4}}}%
    {\ensuremath{\langle\, #2 \, \mid \, #3 \,\rangle_{#4}}}%
}

\makeatletter
\newcommand\notni{\mathrel{\m@th\mathpalette\canc@l\owns}}
\newcommand\canc@l[2]{{\ooalign{$\hfil#1/\mkern1mu\hfil$\crcr$#1#2$}}}
\makeatother

\DeclarePairedDelimiter{\ceil}{\lceil}{\rceil}
\DeclarePairedDelimiter{\floor}{\lfloor}{\rfloor}

\DeclarePairedDelimiter{\abs}{\lvert}{\rvert}

\newcommand{\R}{\mathbb{R}}

\newcommand{\eps}{\epsilon}
\newcommand{\la}{\lambda}

\newcommand{\downto}[1]{\mathord{\downarrow}_{#1}}

\DeclareMathOperator{\diag}{diag}

\makeatletter
\NewCommandCopy\@@pmod\pmod
\DeclareRobustCommand{\pmod}{\@ifstar\@pmods\@@pmod}
\def\@pmods#1{\mkern4mu({\operator@font mod}\mkern 6mu#1)}
\makeatother

\renewcommand{\epsilon}{\varepsilon}
\renewcommand{\phi}{\varphi}

\renewcommand{\geq}{\geqslant}

\newcommand\ppmod[1]{\ (\operatorname{mod}\,#1)}

\makeatletter
\let\save@mathaccent\mathaccent
\newcommand*\if@single[3]{%
  \setbox0\hbox{${\mathaccent"0362{#1}}^H$}%
  \setbox2\hbox{${\mathaccent"0362{\kern0pt#1}}^H$}%
  \ifdim\ht0=\ht2 #3\else #2\fi
  }
\newcommand*\rel@kern[1]{\kern#1\dimexpr\macc@kerna}
\DeclareRobustCommand*\widebar[1]{\@ifnextchar^{{\wide@bar{#1}{0}}}{\wide@bar{#1}{1}}}
\newcommand*\wide@bar[2]{\if@single{#1}{\wide@bar@{#1}{#2}{1}}{\wide@bar@{#1}{#2}{2}}}
\newcommand*\wide@bar@[3]{%
  \begingroup
  \def\mathaccent##1##2{%
    \let\mathaccent\save@mathaccent
    \if#32 \let\macc@nucleus\first@char \fi
    \setbox\z@\hbox{$\macc@style{\macc@nucleus}_{}$}%
    \setbox\tw@\hbox{$\macc@style{\macc@nucleus}{}_{}$}%
    \dimen@\wd\tw@
    \advance\dimen@-\wd\z@
    \divide\dimen@ 3
    \@tempdima\wd\tw@
    \advance\@tempdima-\scriptspace
    \divide\@tempdima 10
    \advance\dimen@-\@tempdima
    \ifdim\dimen@>\z@ \dimen@0pt\fi
    \rel@kern{0.6}\kern-\dimen@
    \if#31
      \overline{\rel@kern{-0.6}\kern\dimen@\macc@nucleus\rel@kern{0.4}\kern\dimen@}%
      \advance\dimen@0.4\dimexpr\macc@kerna
      \let\final@kern#2%
      \ifdim\dimen@<\z@ \let\final@kern1\fi
      \if\final@kern1 \kern-\dimen@\fi
    \else
      \overline{\rel@kern{-0.6}\kern\dimen@#1}%
    \fi
  }%
  \macc@depth\@ne
  \let\math@bgroup\@empty \let\math@egroup\macc@set@skewchar
  \mathsurround\z@ \frozen@everymath{\mathgroup\macc@group\relax}%
  \macc@set@skewchar\relax
  \if#31
    \macc@nested@a\relax111{#1}%
  \else
    \def\gobble@till@marker##1\endmarker{}%
    \futurelet\first@char\gobble@till@marker#1\endmarker
    \ifcat\noexpand\first@char A\else
      \def\first@char{}%
    \fi
    \macc@nested@a\relax111{\first@char}%
  \fi
  \endgroup
}
\makeatother

\usepackage{letltxmacro}
\makeatletter
\let\oldr@@t\r@@t
\def\r@@t#1#2{%
\setbox0=\hbox{$\oldr@@t#1{#2\,}$}\dimen0=\ht0
\advance\dimen0-0.2\ht0
\setbox2=\hbox{\vrule height\ht0 depth -\dimen0}%
{\box0\lower0.4pt\box2}}
\LetLtxMacro{\oldsqrt}{\sqrt}
\renewcommand*{\sqrt}[2][\ ]{\oldsqrt[#1]{#2}}
\makeatother

\newcommand{\al}{\alpha}

\newcommand{\ka}{\kappa}

\newcommand{\lap}[1]{\la^{(#1)}}

\newcommand\hsss{\hat{S}_}
\newcommand\haaa{\hat{A}_}

\newcommand{\ass}[1]{#1^{\mathsf{a}}}
\newcommand{\conj}[1]{#1^{\mathsf{c}}}

\newcommand{\symchar}[1]{\chi^{#1}}
\newcommand{\altchar}[1]{\phi^{#1}}
\newcommand{\altcharnsc}[1]{\phi^{#1}} 

\newcommand{\spsymchar}[1]{\langle #1 \rangle}
\newcommand{\spaltcharsc}[1]{[#1]} 
\newcommand{\spaltchar}[1]{[#1]}

\newcommand\br[1]{\widebar{#1}} 

\newcommand{\corandquot}[3]{[#1; (#2,#3)]}
\newcommand{\twoc}[1]{\ka_{#1}}
\newcommand\fcor[1]{\bar{\ka}_{#1}}
\newcommand{\btwoc}[1]{\fcor{#1}}
\newcommand{\dbl}[1]{\mathrm{dbl}(#1)} 

\newcommand{\notsc}[1]{\eta(#1)}

\calclayout

\usepackage{amsaddr}

\title[Spin characters of \(\haaa n\) proportional to linear characters]{Spin characters of the alternating group \\ which are proportional to linear characters \\ in characteristic \(2\)}
\author{\vspace{-0.4cm} Eoghan McDowell}
\address{\upshape \vspace{-0.1cm}%
University of Bristol \endgraf
\texttt{eoghan.mcdowell@bristol.ac.uk} \endgraf
\href{https://eoghanjmcdowell.com}{\texttt{eoghanjmcdowell.com}}
}

\begin{document}

\begin{abstract}
We classify when a spin and a non-spin irreducible character of the alternating group have proportional \(2\)-modular reductions.
Equivalently, we classify when such a pair of characters are proportional on elements of odd order.
\end{abstract}

\vspace*{-0.1cm}
\maketitle

\thispagestyle{empty}

\vspace*{-0.6cm}
\section{Introduction}

The \emph{double covers} of the symmetric and alternating groups,
being their Schur covers, 
control the projective character theory of the original groups. 
Non-linear projective characters correspond to characters of the double cover which are not lifted from the original group, known as \emph{spin characters}.
The central involution of a double cover acts on an irreducible representation by multiplication by \(\pm1\); the spin characters are precisely those afforded by representations where the central involution acts by \(-1\).
Over a field of characteristic \(2\), this action becomes trivial -- so reduction modulo \(2\) converts a spin character to a linear (i.e.~non-spin) character of the original group.

This paper classifies, for the double cover \(\haaa n\) of the alternating group,
when the \(2\)-modular reduction of an irreducible spin character is the same as the \(2\)-modular reduction of an irreducible non-spin character.
Here, ``same'' means having the same multiset of composition factors.
More generally, we classify when such characters have proportional \(2\)-modular reductions, in the sense of having proportional composition factor multiplicities.
This problem was solved for the double cover \(\hsss n\) of the symmetric group by Fayers and the present author \cite{fayersmcd2025spintospecht}; we deduce our main theorem from that result using the Clifford theory of the symmetric and alternating groups.

An equivalent description of our result is a classification of proportional rows in the \(2\)-modular decomposition matrix of the double cover of the alternating group.
Another interpretation is through \emph{Brauer characters}: restricting an ordinary character to the \(p'\)-classes (i.e.~conjugacy classes of elements of order coprime to \(p\)) gives the Brauer character of the \(p\)-modular reduction.
Thus our result equivalently classifies ordinary characters which are proportional on \(2'\)-classes.

Our main result is as follows.
We write \(\br{\chi}\) for the Brauer character of the \(2\)-modular reduction of an ordinary character \(\chi\).
If \(\chi\) is a character of \(\haaa n\), we write \(\conj{\chi}\) for the \(\hsss n\)-conjugate character, obtained by precomposing with conjugation by an element of \(\hsss n \setminus \haaa n\).
The labelling of the characters of \(\haaa n\) by partitions and the notation for partitions used below is explained in \Cref{sec:labelling}, and the result is proved in \Cref{sec:proof}.

\begin{theorem}
\label{thm:main}
Let \(\phi\) be an irreducible non-spin character of \(\haaa n\) labelled by a partition \(\la\), and let \(\psi\) be an irreducible spin character of \(\haaa n\) labelled by a strict partition \(\al\).
Then \(\br{\phi}\) is proportional to one (or both) of \(\br{\psi}\) and \(\br{\conj{\psi}}\)
if and only if there exist nonnegative integers \(a\), \(r\) and \(s\) such that:
\begin{itemize}
    \item \(\la = \corandquot{\twoc{a}}{\twoc{r}}{\twoc{s}}\), and
    \item \(\al = \btwoc{a} \sqcup 2(\twoc{r}+\twoc{s})\), and
    \item
    either:
    \begin{itemize}[label=\(\triangleright\),leftmargin=1.7em]
        \item 
        \(r \neq s\), in which case \(\phi = \conj{\phi}\) and \(\br{\psi} = \br{\conj{\psi}} = 2^{\floor{\max\{r,s\}/2}+1-\eps(\al)} \br{\phi}\); or
        \item
        \(r=s=0\), in which case \(\{\br{\phi}, \br{\conj{\phi}}\} = \{\br{\psi}, \br{\conj{\psi}}\}\).
    \end{itemize}
\end{itemize}
\end{theorem}

For a discussion on the partitions appearing in the theorem, see \cite[\S 1.2]{fayersmcd2025spintospecht}, where strict partitions of this form are called \emph{\(4\)-stepped-and-semicongruent}.

For both the symmetric and alternating groups, it is only for \(p=2\) that it is possible for a spin and non-spin character to have the same \(p\)-modular reduction.
Meanwhile, for a pair of spin characters or a pair of non-spin characters, equal and proportional \(p\)-modular reductions were classified by Wildon \cite{wildon2008distinctrows}, the author \cite{mcdowell2024charsonlprimeclasses}, and Fayers and the author \cite{fayersmcd2026proportionalspinpairs}, with the exception of pairs of non-spin characters of the alternating group when \(p=3\).
In most cases, only associate and conjugate pairs have equal \(p\)-modular reductions; the exceptions are the spin/non-spin pairs when \(p=2\) (classified in \cite{fayersmcd2025spintospecht} and the present paper), pairs of spin characters when \(p=3\) (classified in \cite{fayersmcd2026proportionalspinpairs}), and pairs of non-spin characters of the alternating group when \(p=3\) (which remain unclassified).

\section{Characters of \texorpdfstring{\(\hsss n\)}{Ŝn} and \texorpdfstring{\(\haaa n\)}{Ân} and their labelling}
\label{sec:labelling}

The double covers \(\hsss n\) and \(\haaa n\) are constructed in \cite[pp.~18--23]{hoffmanhumphreys1992projreps} (there are two double covers of the symmetric group, but they are \emph{isoclinic}, so our results apply to either choice \cite[\S6.7]{ATLAS}).
The theory of their non-spin characters is essentially the (linear) character theory of the symmetric and alternating groups, an account of which can be found in \cite{jameskerber1984reptheory}.
For an account of the spin character theory, see \cite{hoffmanhumphreys1992projreps}.
Here we summarise the results we need.


\subsection{Partitions}

A \emph{partition} of an integer \(n\) is a weakly decreasing sequence of nonnegative integers whose sum is \(n\).
A \emph{strict partition} is a partition all of whose parts are distinct.

Given two partitions \(\la\) and \(\mu\), we write \(\la \sqcup \mu\) for the partition obtained by concatenating the parts of \(\la\) and \(\mu\) and rearranging into weakly decreasing order, and we write \(\la + \mu\) for the partition with parts \((\la+\mu)_i = \la_i + \mu_i\).

The \emph{conjugate} of a partition \(\la\), denoted \(\la'\), is the partition with parts
\[
    \la'_i = \abs{\setbuild{j}{\la_j \geq i}}
\]
(or, equivalently, \(\la'\) is the partition obtained by drawing \(\la\) as a \emph{Young diagram} and reflecting in the main diagonal, exchanging rows and columns).
Write
\[
    \notsc{\la} = \begin{cases}
        1 & \text{ if \(\la \neq \la'\);}\\
        0 & \text{ if \(\la = \la'\).} \\
    \end{cases}
\]

We say a strict partition \(\al\) is \emph{odd} if \(\al\) has an odd number of even parts (i.e.~if a permutation of cycle type \(\al\) is odd) and \emph{even} otherwise.
Write
\[
    \eps(\al) = \begin{cases}
        1 & \text{ if \(\al\) is odd;}\\
        0 & \text{ if \(\al\) is even.}\\
    \end{cases}
\]

It is convenient for us to describe a partition in terms of its \emph{\(2\)-core} and \emph{\(2\)-quotient}.
For \(\la\) a partition and \(p\) a positive integer, the \emph{\(p\)-core} of \(\la\) is the partition obtained from \(\la\) by successively removing \emph{\(p\)-hooks}, and the \(p\)-quotient of \(\la\) is a \(p\)-tuple of partitions encoding the removed \(p\)-hooks (see \cite[pp.~55--56 and \S2.7]{jameskerber1984reptheory} for definitions).
For our purposes, it suffices to know that a partition is uniquely determined by its \(p\)-core and \(p\)-quotient, and that a \(2\)-core is of the form \((r, r-1, r-2, \ldots, 1)\) for some \(r \geq 0\).
We write \(\twoc{r} = (r, r-1, r-2, \ldots, 1)\) for the \(2\)-core of length \(r\), and we write
\(\corandquot{\twoc{r}}{\lap0}{\lap1}\) for the partition with \(2\)-core \(\twoc{r}\) and \(2\)-quotient \((\lap0, \lap1)\).

We write \(\btwoc{r} = (2r-1, 2r-5, 2r-9, \ldots, \bar{r})\) for \(r \geq 0\), where \(\bar{r} \in \set{1,3}\) is the residue of \(2r-1 \ppmod{4}\).
The strict partitions \(\btwoc{r}\) for \(r \geq 0\) are the possible \emph{\(4\)-bar-cores} (where \emph{bar-cores} are the analogue of cores for strict partitions), and satisfy \(\dbl{\btwoc{r}} = \twoc{r}\) (where the \emph{double} \(\dbl{\al}\) of a strict partition \(\al\) is obtained by replacing every part \(x\) with two parts \(\ceil{x/2}\) and \(\floor{x/2}\)), though here we require only the notation.

\subsection{Non-spin characters}

The irreducible non-spin characters of \(\hsss n\) are bijectively labelled by the partitions of \(n\). Write \(\chi^\la\) for the irreducible non-spin character of \(\hsss n\) labelled by a partition \(\la\).

The restriction of \(\chi^\la\) to \(\haaa n\) is either irreducible and self-\(\hsss n\)-conjugate (if \(\la \neq \la'\)), or is the sum of a \(\hsss n\)-conjugate pair of irreducible characters (if \(\la = \la'\)).
All irreducible non-spin characters of \(\haaa n\) arise this way.
Write \(\altchar{\la}\) for one of the irreducible constituents of \(\symchar{\la}\downto{\haaa n}\), so that
\[
    \symchar{\la}\downto{\haaa n} = \symchar{\la'}\downto{\haaa n} = \begin{cases}
        \altcharnsc{\la} & \text{ if \(\la\neq\la'\);}\\
        \altchar{\la} + \conj{(\altchar{\la})} & \text{ if \(\la=\la'\).} \\
    \end{cases}
\]
If \(\la=\la'\), the conjugate pair \(\altchar{\la}\) and \(\conj{(\altchar{\la})}\) agree on all classes except those of elements projecting to cycle type \(\diag(\la)\), where \(\diag(\la)\) is the partition whose parts are the diagonal hook lengths of \(\la\), and these are the only classes on which the characters take irrational values \cite[Theorem~2.5.13]{jameskerber1984reptheory}.

\subsection{Spin characters}

The \emph{associate} of a character \(\chi\) of \(\hsss n\), denoted \(\ass{\chi}\), is the character obtained from \(\chi\) by tensoring with the sign character.

Each irreducible spin characters of \(\hsss n\) or its associate is labelled by a strict partition of \(n\): for a strict partition \(\al\), there is an irreducible spin character labelled by \(\al\) which we denote \(\spsymchar{\al}\), and
\[
    \setbuild{ \spsymchar{\al} }{\text{\(\al\) an even strict partition of \(n\)}}
    \,\sqcup\,
    \setbuild{\spsymchar{\al},\, \ass{\spsymchar{\al}}}{\text{\(\al\) an odd strict partition of \(n\)}}
\]
is a complete irredundant list of the irreducible spin characters of \(\hsss n\).

As in the case of non-spin characters, the restriction of \(\spsymchar{\al}\) to \(\haaa n\) is either irreducible and self-\(\hsss n\)-conjugate (if \(\al\) is odd), or is the sum of a \(\hsss n\)-conjugate pair of irreducible characters (if \(\al\) is even), and all irreducible spin characters of \(\haaa n\) arise this way.
Write \(\spaltchar{\al}\) for one of the irreducible constituents of \(\spsymchar{\al}\downto{\haaa n}\), so that
\[
    \spsymchar{\al}\downto{\haaa n} = \ass{\spsymchar{\al}}\downto{\haaa n} = \begin{cases}
        \spaltchar{\al} & \text{ if \(\al\) odd;}\\
        \spaltchar{\al} + \conj{\spaltchar{\al}} & \text{ if \(\al\) even.} \\
    \end{cases}
\]
If \(\al\) is even, the conjugate pair \(\spaltchar{\al}\) and \(\conj{\spaltchar{\al}}\) agree on all classes except those of elements projecting to cycle type \(\al\), and these are the only classes on which the characters take irrational values \cite[Theorem~8.7(iv)]{hoffmanhumphreys1992projreps}.

\section{Proof}
\label{sec:proof}

\begin{lemma}
\label{lemma:iffrestrictiontoAn}
Let \(\chi_1\) and \(\chi_2\) be characters of \(\hsss n\) and $m\in\R$. Then \(\br{\chi_1} = m \br{\chi_2}\) if and only if \(\br{\chi_1\downto{\haaa n}} = m \br{\chi_2\downto{\haaa n}}\).
\end{lemma}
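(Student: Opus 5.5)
The plan is to work entirely with Brauer characters, i.e.\ with the restriction of ordinary characters to the \(2'\)-classes (elements of odd order). The key observation is that every element of odd order in \(\hsss n\) already lies in \(\haaa n\). Indeed, if \(g\) has odd order \(k\), then \(g = g^{k+1} = (g^{(k+1)/2})^2\) is a square, and the image of a square in \(S_n\) is an even permutation. Hence \(g\) maps into \(A_n\), so \(g \in \haaa n\). Thus the set of \(2\)-regular elements of \(\hsss n\) coincides with the set of \(2\)-regular elements of \(\haaa n\).

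Given this, \(\br{\chi}\), for \(\chi\) a character of \(\hsss n\), is the function \(g \mapsto \chi(g)\) on odd-order elements of \(\hsss n\). Likewise \(\br{\chi\downto{\haaa n}}\) is the function \(g \mapsto \chi(g)\) on odd-order elements of \(\haaa n\). These are the same set of elements, and the values are the same. So \(\br{\chi_1} = m\br{\chi_2}\) as class functions on \(2'\)-elements of \(\hsss n\) if and only if \(\chi_1(g) = m\chi_2(g)\) for all odd-order \(g\). That condition is in turn equivalent to \(\br{\chi_1\downto{\haaa n}} = m\br{\chi_2\downto{\haaa n}}\).

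One subtlety should be addressed. The identity of Brauer characters with \(2'\)-restrictions depends on a choice of lift of \(2'\)-roots of unity. The same choice should be used for both groups, which is possible because the Brauer character of the restricted module is the restriction of the Brauer character. Equivalently, reduction modulo \(2\) commutes with restriction to \(\haaa n\).

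There is no real obstacle. The only step needing care is the parity argument that odd-order elements of the double cover lie in \(\haaa n\), which the square-root trick above handles directly.
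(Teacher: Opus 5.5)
Your proposal is correct and takes the same approach as the paper. The paper's proof just observes that elements of \(\hsss n \setminus \haaa n\) have even order, so only values on \(\haaa n\) matter for the \(2\)-modular Brauer characters. Your square-root argument simply makes explicit why every odd-order element lies in \(\haaa n\).
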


\begin{proof}
Elements of \(\hsss n \setminus \haaa n\) are of even order, so character values on these elements are irrelevant to the \(2\)-modular reductions.
\end{proof}

\begin{lemma}
\label{lemma:Anproportionality_implies_Snproportionality}
Let \(\la\) be a partition of \(n\), let \(\al\) a strict partition of \(n\), and let \(m \in \R\).
If \(\br{\altchar{\la}} = m \br{\spaltchar{\al}}\), then \(\br{\symchar{\la}} = 2^{\eps(\al)-\notsc{\la}} m \br{\spsymchar{\al}}\).
\end{lemma}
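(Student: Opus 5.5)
Since \(\br{\altchar{\la}}\) and \(\br{\spaltchar{\al}}\) are Brauer characters of \(\haaa n\), and \(\hsss n\)-conjugation permutes the \(2'\)-classes of \(\haaa n\), precomposing the hypothesis with conjugation gives
\[
\br{\conj{(\altchar{\la})}} = m\,\br{\conj{\spaltchar{\al}}}.
\]
Adding this to the original equation yields
\[
\br{\altchar{\la}} + \br{\conj{(\altchar{\la})}} = m\bigl(\br{\spaltchar{\al}} + \br{\conj{\spaltchar{\al}}}\bigr).
\]
We then rewrite both sides as restrictions of characters of \(\hsss n\) and apply \Cref{lemma:iffrestrictiontoAn}.

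\emph{Left side.} If \(\la = \la'\), it equals \(\br{\symchar{\la}\downto{\haaa n}}\). If \(\la \neq \la'\), then \(\altchar{\la} = \conj{(\altchar{\la})} = \symchar{\la}\downto{\haaa n}\), so it equals \(2\br{\symchar{\la}\downto{\haaa n}}\). In both cases the left side is \(2^{\notsc{\la}}\br{\symchar{\la}\downto{\haaa n}}\).

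\emph{Right side.} If \(\al\) is even, the bracket is \(\br{\spsymchar{\al}\downto{\haaa n}}\). If \(\al\) is odd, \(\spaltchar{\al}\) is self-conjugate and equals \(\spsymchar{\al}\downto{\haaa n}\), giving \(2\br{\spsymchar{\al}\downto{\haaa n}}\). So the right side is \(2^{\eps(\al)} m\,\br{\spsymchar{\al}\downto{\haaa n}}\).

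Dividing by \(2^{\notsc{\la}}\) gives
\[
\br{\symchar{\la}\downto{\haaa n}} = 2^{\eps(\al)-\notsc{\la}} m\,\br{\spsymchar{\al}\downto{\haaa n}},
\]
and \Cref{lemma:iffrestrictiontoAn} lifts this to \(\br{\symchar{\la}} = 2^{\eps(\al)-\notsc{\la}} m \br{\spsymchar{\al}}\), as claimed.

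The only point needing care is the first step: conjugation must preserve the relation between Brauer characters. This holds because conjugation by an element of \(\hsss n\) is an automorphism of \(\haaa n\) preserving element orders. Equivalently, it commutes with \(2\)-modular reduction, so \(\br{\conj{\chi}} = \conj{(\br{\chi})}\). No further computation should be needed.
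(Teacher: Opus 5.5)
Your proof is correct and follows the paper's argument: conjugate the hypothesis, add the two relations, identify the sums as \(2^{\notsc{\la}}\symchar{\la}\downto{\haaa n}\) and \(2^{\eps(\al)}\spsymchar{\al}\downto{\haaa n}\), and lift via \Cref{lemma:iffrestrictiontoAn}. Your justification that conjugation commutes with \(2\)-modular reduction is a detail the paper leaves implicit.
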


\begin{proof}
Suppose \(\br{\altchar{\la}} = m \br{\spaltchar{\al}}\).
Then \(\br{\conj{(\altchar{\la})}} = m \br{\conj{\spaltchar{\al}}}\), and so \(\br{\altchar{\la} + \conj{(\altchar{\la})}} = m (\br{\spaltchar{\al} + \conj{\spaltchar{\al}}})\).
But \(\altchar{\la} + \conj{(\altchar{\la})} = 2^{\notsc{\la}} \symchar{\la}\downto{\haaa n}\) and \(\spaltchar{\al} + \conj{\spaltchar{\al}} = 2^{\eps(\al)} \spsymchar{\al}\downto{\haaa n}\), and the result follows by \Cref{lemma:iffrestrictiontoAn}.
\end{proof}

\begin{remark}
The converse of \Cref{lemma:Anproportionality_implies_Snproportionality} is false: characters of \(\hsss n\) labelled by \(\la\) and \(\al\) being proportional does not imply the characters of \(\haaa n\) labelled by \(\la\) and \(\al\) are proportional.
For example, let $\la=(2^2)$ and $\al=(4)$.
Then $\br{\symchar\la}=\br{\spsymchar{\al}}$ is the irreducible $2$-modular Brauer character of $\hsss4$ of degree $2$.
On restriction to $\haaa4$, this splits as the sum of two distinct Brauer characters of degree~\(1\), which arise as $\br{\altchar\la}$ and $\br{\conj{(\altchar\la)}}$, while $\br{[\al]}$ is the sum of these two irreducibles (so is not proportional to either of them).
\end{remark}

\begin{proof}[Proof of \Cref{thm:main}]
Suppose \(\br{\altchar{\la}}\) is proportional to
one (or both) of \(\br{\spaltchar{\al}}\) and \(\br{\conj{\spaltchar{\al}}}\).
Then \Cref{lemma:Anproportionality_implies_Snproportionality} implies that \(\br{\symchar{\la}}\) is proportional to \(\br{\spsymchar{\al}}\), and so by \cite[Theorem~1.1]{fayersmcd2025spintospecht} we have \(\la = \corandquot{\twoc{a}}{\twoc{r}}{\twoc{s}}\) and \(\al = \btwoc{a} \sqcup 2(\twoc{r}+\twoc{s})\) for some nonnegative integers \(a\), \(r\) and \(s\).
Note that \(\la\) is self-conjugate if and only if \(r=s\); we claim that if this is the case, then \(r=s=0\).
Indeed, if \(\la\) is self-conjugate,
then \(\br{\altchar{\la}}\) takes irrational values on \(g \in \haaa n\) projecting to cycle type \(\diag(\la)\) (necessarily consisting of distinct odd parts). 
Thus \(\br{\spaltchar{\al}}\) must also take irrational values on these elements, and so \(\al = \diag(\la)\).
But in order for \(\al = \btwoc{a} \sqcup 2(\twoc{r}+\twoc{s})\) to consist of distinct odd parts, we must have \(r=s=0\).

Conversely, suppose \(\la = \corandquot{\twoc{a}}{\twoc{r}}{\twoc{s}}\) and \(\al = \btwoc{a} \sqcup 2(\twoc{r}+\twoc{s})\) for some nonnegative integers \(a\), \(r\) and \(s\), with either \(r \neq s\) or \(r=s=0\).
By \cite[Theorem~1.1]{fayersmcd2025spintospecht} we have \( \br{\spsymchar{\al}} = 2^{\floor{\max\{r,s\}/2}} \br{\symchar{\la}} \).
Then by \Cref{lemma:iffrestrictiontoAn} we have \( \br{\spsymchar{\al}\downto{\haaa n}}= 2^{\floor{\max\{r,s\}/2}} \br{\symchar{\la}\downto{\haaa n}} \).

If \(r \neq s\), then \(\la\) is not self-conjugate, so \(\symchar{\la}\downto{\haaa n} = \altchar{\la}\) is irreducible.
If furthermore \(\max\{r,s\}\) is odd, then \(\al\) is odd, so \(\spsymchar{\al}\downto{\haaa n} = \spaltchar{\al} = \conj{\spaltchar{\al}}\) is also irreducible
and \( \br{\spaltcharsc{\al}} = \br{\conj{\spaltcharsc{\al}}} = 2^{\floor{\max\{r,s\}/2}} \br{\altcharnsc{\la}} \) as required.
If instead \(\max\{r,s\}\) is even,
then \(\al\) is even,
so \(\spsymchar{\al}\downto{\haaa n} = \spaltchar{\al} + \conj{\spaltchar{\al}}\),
but (since \(\al\) necessarily has an even part due to \(r\) and \(s\) being distinct) the only classes on which \(\spaltchar{\al}\), \(\conj{\spaltchar{\al}}\) differ are even order, so \(\br{\spaltchar{\al}} = \br{\conj{\spaltchar{\al}}} = \frac12 \br{\spsymchar{\al}\downto{\haaa n}}\); then \( \br{\spaltchar{\al}} = \br{\conj{\spaltchar{\al}}} = 2^{\floor{\max\{r,s\}/2} +1} \br{\altchar{\la}} \) as required.

If \(r=s=0\), then \(\la = \twoc{a}\) is self-conjugate and \(\al = \btwoc{a} = \diag(\la)\). 
Thus we have \(\br{\altchar{\la} + \conj{(\altchar{\la})}} = \br{\spaltchar{\al} + \conj{\spaltchar{\al}}}\).
But  \(\br{\altchar{\la}}\) and \(\br{\conj{(\altchar{\la})}}\) are irreducible \cite[Theorem~1.1]{benson1988Ansimplesmod2}, so \(\br{\spaltchar{\al}}\), \(\br{\conj{\spaltchar{\al}}}\) must be the same pair of irreducible characters in some order.
(Alternatively,
observe that all four of \(\br{\altchar{\la}}\), \(\br{\conj{(\altchar{\la})}}\), \(\br{\spaltchar{\al}} \) and \(\br{\conj{\spaltchar{\al}}}\)
agree on all classes except classes projecting to cycle type \(\al\),
and that on these classes each conjugate pair differs by the same quantity, namely \(\sqrt{ {(-1)}^{\frac{n-l}{2}}\al_1 \cdots \al_{l}}\) where \(l\) is the length of \(\al\) \cite[Theorem~2.5.13]{jameskerber1984reptheory} \cite[Theorem~8.7(iv)]{hoffmanhumphreys1992projreps}.)
\end{proof}

\section*{Acknowledgements}

\noindent The author thanks Matthew Fayers and John Murray for comments on an earlier version of this paper.

\bibliographystyle{alpha-noxc}
\bibliography{references}

\newcommand\ATlabel{}\newcommand\AT[2]{ATLAS}
\begin{thebibliography}{F{\relax McD}26}

\bibitem[\AT85]{ATLAS}
\ATlabel{\initials{J.H.} Conway, \initials{R.T.} Curtis, \initials{S.P.} Norton, \initials{R.A.} Parker and \initials{R.A.} Wilson}.
\newblock {\em {\(\mathbb{ATLAS}\)} of finite groups}.
\newblock Oxford University Press, Eynsham, 1985.
\newblock With computational assistance from J. G. Thackray.

\bibitem[Ben88]{benson1988Ansimplesmod2}
Dave Benson.
\newblock Spin modules for symmetric groups.
\newblock {\em Journal of the London Mathematical Society (Second Series)}, 38(2):250--262, 1988.

\bibitem[F{\relax McD}25]{fayersmcd2025spintospecht}
Matthew Fayers and Eoghan {\relax McD}owell.
\newblock Spin characters of the symmetric group which are proportional to linear characters in characteristic $2$.
\newblock {\em Annals of Representation Theory}, 2(1):37--83, 2025.

\bibitem[F{\relax McD}26]{fayersmcd2026proportionalspinpairs}
Matthew Fayers and Eoghan {\relax McD}owell.
\newblock Spin characters of symmetric and alternating groups which are proportional in characteristic $3$, 2026.
\newblock Preprint, \href{https://arxiv.org/abs/2606.10622}{arXiv:2606.10622}.

\bibitem[HH92]{hoffmanhumphreys1992projreps}
{\initials{P.N.}}~Hoffman and {\initials{J.F.}}~Humphreys.
\newblock {\em Projective Representations of the Symmetric Groups: Q-Functions and Shifted Tableaux}.
\newblock Oxford Mathematical Monographs. Clarendon Press, 1992.

\bibitem[JK84]{jameskerber1984reptheory}
Gordon James and Adalbert Kerber.
\newblock {\em The Representation Theory of the Symmetric Group}.
\newblock Encyclopedia of Mathematics and its Applications. Cambridge University Press, 1984.

\bibitem[McD24]{mcdowell2024charsonlprimeclasses}
Eoghan McDowell.
\newblock Characters and projective characters of alternating and symmetric groups determined by values on \(l'\)-classes.
\newblock {\em Arkiv f\"{o}r Matematik}, 62, 2024.

\bibitem[Wil08]{wildon2008distinctrows}
Mark Wildon.
\newblock Character values and decomposition matrices of symmetric groups.
\newblock {\em Journal of Algebra}, 319(8):3382--3397, 2008.

\end{thebibliography}

\end{document}